\def\R{\mathbb R}
\def\Z{\mathbb Z}
\def\bcases{\begin{cases}}
\def\ecases{\end{cases}}
\newtheorem{thm}{Theorem}
\newtheorem{prop}[thm]{Proposition}
\newtheorem{lem}[thm]{Lemma}
\newtheorem{definition}[thm]{Definition}
\newtheorem{ex}[thm]{Example}
\newcommand{\eps}{\varepsilon}
\newenvironment{proof*}{\vskip 2mm\noindent {}}{\hfill $\Box$ \vskip 2mm}
\title{How regular is the evolute of a plane curve?}
\author{Nikolai Nikolov}
\address{N. Nikolov\\
Institute of Mathematics and Informatics\\
Bulgarian Academy of Sciences\\
Acad. G. Bonchev 8, 1113 Sofia, Bulgaria
\vspace{1mm}
\newline Faculty of Information Sciences\\
State University of Library Studies and Information Technologies\\
Shipchenski prohod 69A, 1574 Sofia,
Bulgaria}
\email{nik@math.bas.bg}
\author{Pascal J. Thomas}
\address{P.J. Thomas\\
Institut de Math\'ematiques de Toulouse; UMR5219 \\
Universit\'e de Toulouse; CNRS \\
UPS, F-31062 Toulouse Cedex 9, France}
\email{pascal.thomas@math.univ-toulouse.fr}
\thanks{The first named author was partially supported by the Bulgarian National
Science Fund, Ministry of Education and Science of Bulgaria under contract KP-06-N82/6. The
second named author wishes to thank the Institute of Mathematics and Informatics of the Bulgarian Academy
of Sciences for its hospitality during the time when most of this work was carried out.}
\begin{document}

\subjclass[2010]{53A04}

\keywords{curvature, evolute, involute, regularity, cusps}

\begin{abstract}
We study the relationship between the smoothness of a plane curve and that of its evolute,
especially in the cases where the parent curve is no more two or three times continuously
differentiable, and exhibit the same kind of apparent improvement in regularity: 
in the generic local situation, the evolute has one order of regularity less than the parent curve.
\end{abstract}

\maketitle

\section{Introduction}

\subsection{Motivations and outline.}

The notions of osculating circle and center of curvature of a smooth enough plane curve are classical, going back at
least to Leibnitz.  The locus of all the centers of curvature of a given curve is called its \emph{evolute} and
turns out to be the geometric envelope of the family of normal lines to the curve.  The inverse operation--finding an \emph{involute}, a
curve which the given curve is the evolute of--has 
even  found some applications to the design of some mechanisms, see for instance \cite{wiki2}.
This has other beautiful geometrice properties, some less well-known, such as the Tait-Kneser Theorem \cite{Ta}: if
the radius of curvature varies in a monotone way, then the osculating circles are nested.  Observe that the converse of this is immediate.
Most of this can be found in elementary treaties about differential geometry or advanced calculus;
a good succinct contemporary account can be found in \cite{GTT}, which singles out the Tait-Kneser Theorem as worthy of more attention.

However, the examples for which evolutes have been studied are mostly infinitely differentiable (cycloids), 
or even algebraic curves (conic sections),
see the Wikipedia page \cite{wiki1} or e.g. \cite{PRS}.  When the curves 
are bounded, their curvature must admit at least four local extrema (Four Vertex Theorem \cite{Mu}), which generate cusps
in the evolute; the interesting generalization given in \cite{FuTa} (and subsequent papers) studies such singularities,
but always in the context of the parent curve being infinitely differentiable.  

In contrast to this,  we are interested in the least possible amount of smoothness 
to be required from  the parent curve in order to get a certain smoothness for the evolute, 
and conversely.  At a first glance, it would seem that since the curvature depends on second
derivatives and the evolute is expressed in terms of it, it should have two orders 
of regularity less than the parent curve.
The result however is that the evolute has only one order of regularity 
less than the parent curve, provided
that the curvature of that curve has non-vanishing derivative, see Theorem \ref{main}. 
This ``bootstrap'' phenomenon, an apparent regularity gain due to geometric constraints, reminds us of
what happens when looking at distance functions and was pointed out in \cite{KP}, 
who attribute the initial observation to the appendix of \cite{GT}, see  \cite{NT}
for recent refinements.

Our results are local, since globally an evolute can have double points; we provide an example to that effect.

Much of the present paper may be known, but not always easy to find in current references, and we believe
we have a relatively concise exposition of the results, and proofs which do not require many prerequisites.

\subsection{Definitions and notations.}

A plane curve is a continuous map $\gamma$ from an interval $(a;b)$ to the plane $\R^2$. For $k\in \Z_+$,
$\alpha \in (0,1)$, we say the curve is of class $\mathcal C^k$ 
(resp. $\mathcal C^{k,\alpha}$) if its components are.  In order to unify notation, we will write $\mathcal C^{k,0}$ for $\mathcal C^{k}$.
Recall that a function is of class $\mathcal C^\infty$ if it is of class $\mathcal C^{k}$ for any $k$.
We routinely use the fact that if $f,g \in \mathcal C^{k,\alpha}$, with $k\ge 1$, then $f\circ g\in \mathcal C^{k,\alpha}$
(this of course fails for $k=0, \alpha<1$). 

For $k\ge 1$, we say that the curve is $\mathcal  C^{k,\alpha}$-\emph{regular}
if in addition, for all $t\in (a;b)$, $\gamma'(t)\neq (0,0)$. In this case, there exists an open interval $I$ containing $t$
such that $\gamma|_I$ embeds $I$ into $\R^2$ as a one-dimensional submanifold of the same regularity as $\gamma$.  Note
that this does not imply that $\gamma$ is globally one-to-one, only over a small enough interval.

For a  curve of class $\mathcal C^1$ and $t_0\in (a;b)$, the \emph{arc length} is $s_\gamma (t) := \int_{t_0}^t |\gamma'(u)| du$.
Changing the point $t_0$ changes $s_\gamma(t)$ by an additive constant, and we will have to do that on occasion.
When $\gamma$ is, in addition, regular, $s_\gamma$ is an increasing diffeomorphism from $(a;b)$ to $(s_\gamma (a),s_\gamma (b))$,
of at least  the same smoothness class as $\gamma$; the arclength parametrization therefore provides
the best possible smoothness for the curve among the regular parametrizations.  
We will consider $\gamma \circ s_\gamma^{-1}$ as the ``same curve parametrized
by arc length'', but will need to distinguish it in notation.  

When $\gamma$ is parametrized by arc length, $|\gamma'(s)|=1$ for any $s\in (a;b)$. There is a lifting to $\R$ 
of the map $\gamma': (a;b) \longrightarrow S^1$, so we denote $\gamma'(s)=(\cos \varphi(s), \sin \varphi(s))$
where $\varphi \in \mathcal C^{k-1,\alpha}(a;b)$  when $\gamma$ is of class
$\mathcal C^{k,\alpha}$.  Let $\nu(s):= (-\sin \varphi(s), \cos \varphi(s))$ (unit normal vector) 
be $\gamma'(s)$ rotated by $+\frac\pi2$. 

When $k\ge 2$, $\kappa(s):= \varphi'(s)$ is called the \emph{(signed) curvature}
of $\gamma$. Reducing the interval if needed, we make the standing assumption that
for $s\in(a;b)$, $\kappa(s)\neq 0$. Then we can define  $R(s):= 1/ \kappa(s)$ is the \emph{(signed) radius of curvature} of $\gamma$. 
The well-known
(plane) Frenet-Serret formulae immediately follow:
\[
\gamma''(s) = \frac1{R(s)} \nu(s), \qquad \nu'(s) = -\frac1{R(s)} \gamma'(s).
\]
The \emph{evolute} of the curve $\gamma$ is given by the map
\[
\tilde \gamma (s) := \gamma (s) + R(s) \nu (s).
\]
When we parametrize the evolute by its own arc length $\tilde s$, we write $\tilde s \mapsto \tilde \gamma_1 (\tilde s) = \tilde \gamma (s)$.

\subsection{Main results.}

\begin{thm}
\label{main}
Let $\gamma: (a;b)\longrightarrow \R^2$ be a $\mathcal C^2$-regular curve such that $\kappa(t)\neq 0$ for all $t\in(a;b)$,
and denote its evolute by $\tilde \gamma$. Then
\begin{enumerate}[(i)]
\item
$\tilde \gamma$ is $\mathcal C^1$-regular if and only if $R$ (or equivalently $\kappa$) is strictly monotone on $(a;b)$.
\item
For $k\ge 3$, $0\le \alpha \le 1$: $\gamma$ is $\mathcal C^{k,\alpha}$-regular  and $R'(s)\neq 0$, if and only if
$\tilde \gamma$ is  $\mathcal C^{k-1,\alpha}$-regular.
\end{enumerate}
\end{thm}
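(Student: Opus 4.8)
The computational heart of everything is the identity $\tilde\gamma'(s) = R'(s)\nu(s)$, obtained from $\tilde\gamma = \gamma + R\nu$ together with the Frenet--Serret relation $\nu' = -\frac1R\gamma'$, which makes the $\gamma'$ terms cancel: wherever $R$ is differentiable the evolute moves along the unit normal $\nu$ with speed $|R'|$. Since in the bare $\mathcal C^2$ setting $R = 1/\kappa$ is only continuous, for part (i) I would not differentiate directly but expand the chord. Writing $\tilde\gamma(s) - \tilde\gamma(s_0) = A(s)\gamma'(s_0) + B(s)\nu(s_0)$ in the orthonormal frame at $s_0$, a short computation with $\psi(u) := \varphi(u)-\varphi(s_0)$ gives the exact formulas $A(s) = \int_{s_0}^s \cos\psi\,\frac{R(u)-R(s)}{R(u)}\,du$ and $B(s) = R(s)-R(s_0) + \int_{s_0}^s \sin\psi\,\frac{R(u)-R(s)}{R(u)}\,du$. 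Continuity of $R$ alone forces $A(s) = o(s-s_0)$, so the only candidate for the tangent direction of the evolute is $\pm\nu(s_0)$.

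For part (i) the plan is then as follows. If $R$ is strictly monotone it is a homeomorphism onto its image, so I can use $r := R(s)$ itself as a parameter (write $s = \sigma(r)$). Monotonicity yields the crucial bound $|R(u)-R(s)| \le |R(s)-R(s_0)| = |r-r_0|$ for $u$ between $s_0$ and $s$, which makes both integral remainders $O(s-s_0)$ and hence $\frac{d}{dr}\tilde\gamma(\sigma(r)) = \nu(\sigma(r))$, a continuous \emph{unit} vector; this is exactly a $\mathcal C^1$-regular parametrization, and shows in passing that $R$ is, up to sign and an additive constant, the arclength of the evolute. For the converse I argue contrapositively: a continuous non-monotone $R$ is either locally constant on a subinterval, where $\tilde\gamma' = R'\nu = 0$ makes $\tilde\gamma$ stationary, or it has an interior strict local extremum $s_*$, near which the chord estimates show the evolute reaching $\tilde\gamma(s_*)$ with tangent direction tending to $+\nu(s_*)$ from one side and $-\nu(s_*)$ from the other, i.e. a cusp; neither is $\mathcal C^1$-regular.

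For the forward direction of part (ii) I would simply propagate regularity, exploiting the apparent one-derivative gain that arclength reparametrization produces here. If $\gamma$ is $\mathcal C^{k,\alpha}$-regular then $\varphi \in \mathcal C^{k-1,\alpha}$, $\kappa = \varphi' \in \mathcal C^{k-2,\alpha}$, $R = 1/\kappa \in \mathcal C^{k-2,\alpha}$ and $\nu \in \mathcal C^{k-1,\alpha}$. Since $R' \in \mathcal C^{k-3,\alpha}$ is nonvanishing, the evolute's arclength $\tilde s = \int |R'|$ lies in $\mathcal C^{k-2,\alpha}$ and is a diffeomorphism; crucially, the unit tangent of the evolute is $\pm\nu$, which is $\mathcal C^{k-1,\alpha}$, one degree \emph{smoother} than the raw map $\tilde\gamma(s)$. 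Hence $\tilde\gamma_1'(\tilde s) = \pm\nu(s(\tilde s)) \in \mathcal C^{k-2,\alpha}$, so $\tilde\gamma_1 \in \mathcal C^{k-1,\alpha}$-regular.

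The reverse direction of part (ii) is where I expect the real difficulty. Here $\gamma$ is an involute of $\tilde\gamma$, and identifying $R$ with the evolute's arclength (from part (i)) gives the clean reconstruction $\gamma = \tilde\gamma_1 - (\tilde s - c)\tilde\gamma_1'$ as a function of $\tilde s$, so that $\gamma' = \pm\tilde\nu$, the evolute's unit normal. A naive regularity count only returns $\gamma \in \mathcal C^{k-1,\alpha}$: the missing derivative cannot be recovered by composition of maps. The mechanism that recovers it is that $\varphi$ and the evolute's tangent angle differ by the constant $\pi/2$, which converts the reconstruction into a first-order ODE $R' = F(R)$ with $F(R) = \epsilon/\!\left(R\,\tilde\kappa(\epsilon R + c)\right) \in \mathcal C^{k-3,\alpha}$; since a solution of $R' = F(R)$ is one degree smoother than $F$, one gets $R \in \mathcal C^{k-2,\alpha}$, hence $\kappa = 1/R \in \mathcal C^{k-2,\alpha}$, $\varphi \in \mathcal C^{k-1,\alpha}$ and finally $\gamma \in \mathcal C^{k,\alpha}$. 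The genuine obstacle, and the step demanding the most care, is to first extract $R' \ne 0$ from the hypothesis: through $|\tilde\kappa| = 1/(|R|\,|R'|)$, finiteness of the $\mathcal C^{k-1,\alpha}$-regular evolute's curvature excludes the cusps $R' = 0$, while one must separately rule out the evolute inflections $\tilde\kappa = 0$ (equivalently $R' = \infty$), since these are precisely what would produce involutes that are $\mathcal C^2$ but no smoother.
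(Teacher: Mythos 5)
Your chord decomposition is exactly the paper's Lemma \ref{increvol}, and your treatment of the monotone direction of (i) and of the forward implication of (ii) (reparametrize by $\tilde s=R(s)$, observe $\tilde\gamma_1'=\nu\circ R^{-1}$, count derivatives of the composition) coincides with the paper's argument. But there are two genuine gaps. First, in the converse of (i), your dichotomy --- $R$ locally constant on a subinterval, or $R$ with a strict interior local extremum --- is not exhaustive for a merely continuous $R$: a non-strict local extremum need not sit on a plateau (the level $R(s_1)$ can be attained on a sequence, or a perfect set, accumulating at $s_1$ without $R$ being constant on any interval), and strict monotonicity can also fail at a point only through local extrema accumulating at it from the sides. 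The paper devotes two of its three cases to exactly these situations: if $R(s_1+h_n)=R(s_1)$ with $h_n\to 0$, the chord formula gives $\tilde\gamma(s_1+h_n)-\tilde\gamma(s_1)=o(|h_n|)$, so any monotone reparametrization would have vanishing derivative at $s_1$; and the accumulation case reduces to the extremum cases. Without these, your contrapositive does not cover all failures of strict monotonicity. (Your plateau case itself is fine: there $R'=0$ exists, so $\tilde\gamma'=R'\nu=0$ is legitimate.)

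Second, in the converse of (ii) you have correctly located the crux --- excluding inflections of the evolute --- but you leave it unproven, and your other non-degeneracy step is circular as stated: the identity $|\tilde\kappa|=1/(|R|\,|R'|)$ presupposes that $R$ is differentiable, which at that stage is unknown (part (i) yields only continuity and strict monotonicity of $R$; the danger is not $R'=0$ but $R'$ failing to exist, i.e. $ds/d\tilde s=0$). The paper closes precisely this hole using the standing hypothesis $\kappa\neq 0$: the involute formula $\gamma_1(\tilde s)=\tilde\gamma_1(\tilde s)+(c-\tilde s)\tilde\gamma_1'(\tilde s)$ shows $\tilde s\mapsto s=R^{-1}(\tilde s)$ is of class $\mathcal C^1$, hence $|s_2-s_1|\le C|\tilde s_2-\tilde s_1|$; since $\tilde\gamma_1'(\tilde s)=\nu(R^{-1}(\tilde s))$ and $\nu'=-\gamma'/R$ never vanishes, $\tilde\gamma_1''$ cannot vanish. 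Once that is established, $\tilde s\mapsto s$ is a local $\mathcal C^1$ diffeomorphism, so $R$ is automatically $\mathcal C^1$ with $R'\neq 0$ --- no separate ``finite curvature excludes cusps'' step is needed --- and your ODE $R'=F(R)$ with $F(r)=\epsilon/\bigl(r\,\tilde\kappa(\epsilon r+c)\bigr)\in\mathcal C^{k-3,\alpha}$ becomes legitimate. Granting these repairs, your ODE bootstrap is a correct alternative to the paper's: the paper instead feeds $\gamma',R\in\mathcal C^{k-2,\alpha}$ into the Frenet relation $\nu'=-\gamma'/R$ to conclude $\nu\in\mathcal C^{k-1,\alpha}$ and hence $\gamma\in\mathcal C^{k,\alpha}$; both mechanisms extract the same single extra derivative from the same first-order relation, the Frenet version avoiding any appeal to ODE regularity theory, while the paper's final verification of $R'\neq0$ (via the boundedness of $-\kappa^3/\kappa'$, or via its Proposition \ref{locbeh}) matches in spirit the curvature argument you sketched, but is run only after full smoothness of $\gamma$ is in hand.
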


Note that nonvanishing of the derivative of the radius of curvature implies its monotonicity,
since we are working on an interval.

The fact that monotonicity of the radius of curvature is 
necessary and sufficient for smoothness of the evolute is well known,
and mentioned for instance in \cite[p. 308]{Co}.

\section{Proofs}

\begin{lem}
\label{increvol}
Let $\gamma$ be a $\mathcal C^2$-regular plane curve parametrized 
over $(a,b)$ by arc length $s$, and $a<s_1<s_2<b$. Let  $\tilde \gamma$ be its evolute, parametrized
by the same $s\in (a,b)$.  Then
\begin{equation}
\label{diffevol}
\tilde \gamma (s_2) - \tilde \gamma (s_1) =
\int_{s_1}^{s_2} \frac{R(s)-R(s_2)}{R(s)} \gamma'(s) ds + (R(s_2)-R(s_1)) \nu (s_1).
\end{equation}
\end{lem}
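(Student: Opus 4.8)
The plan is to verify the identity \eqref{diffevol} directly, by integrating the two continuous vector fields $\gamma'$ and $\nu'$, rather than by differentiating the evolute. The tempting first move would be to compute $\tilde\gamma'(s) = \gamma'(s) + R'(s)\nu(s) + R(s)\nu'(s) = R'(s)\nu(s)$, using the Frenet–Serret relation $\nu' = -\gamma'/R$ to cancel the $\gamma'$ terms, and then to integrate. The obstacle is that under the stated $\mathcal C^2$-regularity of $\gamma$ one only has $\varphi \in \mathcal C^1$, hence $\kappa = \varphi' \in \mathcal C^0$ and $R = 1/\kappa \in \mathcal C^0$; there is no reason for $R'$ to exist, so $\tilde\gamma'$ need not be defined. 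The virtue of \eqref{diffevol} is precisely that its right-hand side involves only $R$, $\gamma'$ and $\nu$, all continuous, so it is meaningful in exactly this low-regularity setting; the proof should respect that.

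Accordingly, I would start from the right-hand side of \eqref{diffevol}. Writing the integrand as $\frac{R(s)-R(s_2)}{R(s)}\gamma'(s) = \gamma'(s) - R(s_2)\frac{\gamma'(s)}{R(s)}$, I split the integral into two convergent pieces, both integrands being continuous. The first piece integrates, by the fundamental theorem of calculus, to $\gamma(s_2)-\gamma(s_1)$.

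The key step is the second piece. Here I invoke the Frenet–Serret formula in the form $\frac{1}{R(s)}\gamma'(s) = -\nu'(s)$, which is legitimate because $\nu \in \mathcal C^1$ (as $\varphi \in \mathcal C^1$). Thus $\int_{s_1}^{s_2}\frac{\gamma'(s)}{R(s)}\,ds = -\big(\nu(s_2)-\nu(s_1)\big)$, so the second piece contributes $R(s_2)\big(\nu(s_2)-\nu(s_1)\big)$, and the integral equals $\gamma(s_2)-\gamma(s_1) + R(s_2)\nu(s_2) - R(s_2)\nu(s_1)$.

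Finally, I add the boundary term $(R(s_2)-R(s_1))\nu(s_1)$: this cancels the $-R(s_2)\nu(s_1)$ and produces $-R(s_1)\nu(s_1)$, so the total regroups as $\big(\gamma(s_2)+R(s_2)\nu(s_2)\big) - \big(\gamma(s_1)+R(s_1)\nu(s_1)\big)$, which is $\tilde\gamma(s_2)-\tilde\gamma(s_1)$ by the definition of the evolute. I do not expect a genuine obstacle beyond this bookkeeping; the one point requiring care is keeping every manipulation at the level of the continuous quantities $\gamma'$ and $\nu'$, and never differentiating $R$.
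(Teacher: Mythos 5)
Your proof is correct and is essentially the paper's own argument read in reverse: the paper starts from the integrated Frenet--Serret relation $\nu(s_2)-\nu(s_1)=-\int_{s_1}^{s_2}\frac{1}{R(s)}\gamma'(s)\,ds$ and the algebraic regrouping $\tilde\gamma(s_2)-\tilde\gamma(s_1)=\gamma(s_2)-\gamma(s_1)+R(s_2)(\nu(s_2)-\nu(s_1))+(R(s_2)-R(s_1))\nu(s_1)$, which are exactly the two ingredients of your verification. Your opening observation---that one must not differentiate $R$ or $\tilde\gamma$, since at $\mathcal C^2$ regularity only $\nu\in\mathcal C^1$ is available---correctly identifies the point of the lemma and matches the paper's treatment.
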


\begin{proof}
Observe first that the Frenet-Serret formulae (and the continuity of $R$) imply
\[
\nu (s_2)-\nu(s_1)= - \int_{s_1}^{s_2} \frac1{R(s)} \gamma'(s) ds = -\frac1{R(s_2)} (\gamma (s_2)-\gamma(s_1))
+ \int_{s_1}^{s_2} \left( \frac1{R(s_2)} - \frac1{R(s)} \right) \gamma'(s) ds.
\]
Then notice that
\[
\tilde \gamma (s_2) - \tilde \gamma (s_1) = \gamma (s_2)-\gamma(s_1) + R(s_2) (\nu (s_2)-\nu(s_1)) + (R (s_2)-R(s_1)) \nu (s_1).
\]
\end{proof}

\begin{proof*}{\it Proof of Theorem \ref{main}, (i).}

Suppose that $R$ is strictly monotone on $(a;b)$, we may assume that it is increasing.  Then, since $|\gamma'(s)| =1$ and 
$0\le R (s_2)-R(s) \le R (s_2)-R(s_1)$ for $s_1\le s \le s_2$,
\[
\left| \int_{s_1}^{s_2} \frac{R(s)-R(s_2)}{R(s)} \gamma'(s) ds \right| \le |R(s_2)-R(s_1)| \int_{s_1}^{s_2} \frac{1}{|R(s)|}  ds
\le  |s_2-s_1| \frac{|R(s_2)-R(s_1)|}{\min_{[s_1,s_2]} |R|}.
\]
So  \eqref{diffevol} implies
\begin{equation}
\label{tangevol}
\frac{\tilde \gamma (s_2) - \tilde \gamma (s_1)}{R(s_2)-R(s_1)} = \nu(s_1) + O(|s_2-s_1|).
\end{equation}
This implies that the curve admits a tangent vector, parallel to the unit vector $\nu(s)$, and that 
$s\mapsto R(s)$, which is a homeomorphism of intervals (by strict monotonicity) provides a (continuous) parametrization  
by arclength.  Let $\tilde s:= R(s)$, $\tilde \gamma_1 (\tilde s) = \tilde \gamma (R^{-1}(\tilde s))$ 
and $\tilde \gamma_1' (\tilde s) = \nu (R^{-1}(\tilde s))$, which is a continuous non-vanishing vector function of $\tilde s$,
so $\tilde \gamma_1$, and therefore $\tilde \gamma$, is $\mathcal C^1$-regular.

Conversely, assume that near a point $s_1$, $R$ is not strictly monotone.  

Exchanging the roles of $s_1$ and $s_2$ in \eqref{diffevol}, we have 
\[
\tilde \gamma (s_2) - \tilde \gamma (s_1) =
\int_{s_1}^{s_2} \frac{R(s)-R(s_1)}{R(s)} \gamma'(s) ds + (R(s_2)-R(s_1)) \nu (s_2).
\]

First notice that we can find  values of $s_2 \ge s_1$ arbitrarily close to $s_1$
 such that $|R(s_2)-R(s_1)|=\max_{s\in[s_1;s_2]} |R(s)-R(s_1)|$, and likewise for $s_2\le s_1$. 
The  argument above, applied to sequences of such values $s_{2,n}\to s_1$,
 leads to
\[
 \frac{\tilde \gamma (s_{2,n}) - \tilde \gamma (s_1)}{R(s_{2,n})-R(s_1)} = \nu(s_{2,n}) + O(|s_{2,n}-s_1|),
\]
and since $\nu(s_{2,n})\to \nu(s_1)$,
in all cases $\nu(s_1)$ belongs to the tangent cone
of $\tilde \gamma$ at $\tilde \gamma (s_1)$. 

There are three cases.

{\bf Case 1.}

If $R$ admits a strict local extremum at $s_1$, assume without loss of generality that it is a local maximum.
With $\langle \cdot , \cdot \rangle$ standing for the Euclidean
inner product in the plane,
\begin{equation}
\label{DL1}
\langle \tilde \gamma (s_2) - \tilde \gamma (s_1), \nu(s_1) \rangle =
(R (s_2)-R(s_1)) + \int_{s_1}^{s_2} \frac{R(s)-R(s_2)}{R(s)} \langle \gamma'(s), \nu(s_1) \rangle ds,
\end{equation}
so for any $\eps>0$, if
$s_2=s_1+h$ with $h$ chosen as above, $|R(s)-R(s_2)| \le |R(s_1)-R(s_2)|$ for $s\in [s_1;s_2]$
and $\langle \gamma'(s), \nu(s_1) \rangle = \langle (\gamma'(s)-\gamma'(s_1)), \nu(s_1) \rangle= O(|s-s_1|)$, so
the integral term is bounded in modulus by $C|s_1-s_2|^2 |R(s_1)-R(s_2)|$ and 
$\langle \tilde \gamma (s_2) - \tilde \gamma (s_1), \nu(s_1) \rangle >0$.  For values $h<0$  close enough to $0$, 
the same argument again yields  $\langle \tilde \gamma (s_2) - \tilde \gamma (s_1), \nu(s_1) \rangle >0$,
so $\tilde \gamma$ cannot be regular at $s_1$.

{\bf Case 2.}

If $R$ admits a non-strict local extremum at $s_1$, there must be values of $h$ arbitrarily close to $0$
such that $R(s_1+h)=R(s_1)$. So if $\tilde \gamma$ was differentiable at $s_1$, we would have $\tilde \gamma'(s_1)=0$,
and this would be true for any (monotone) reparametrization of the curve, so it cannot be regular.

{\bf Case 3.}

Finally if $R$ does not admits a strict local extremum at $s_1$, and is not strictly monotone in any neighborhood of $s_1$,
there must be a sequence of values $h_n\to0$ such that $R$ admits a local extremum at $s_1+h_n$, so 
$\tilde \gamma$ cannot be $\mathcal C^{1}$-regular at $s_1$.

\vskip.4cm

{\it Proof of Theorem \ref{main}, (ii).}

Direct part:

Since $R(s)= \left( \langle \gamma''(s),  \frac{\gamma'(s)}{|\gamma'(s)|} \rangle \right)^{-1}$, it is immediate
that $R$ and $\tilde \gamma$ are of class $\mathcal C^{k-2,\alpha}$. The hypothesis about the derivative of $R$ means
that $R$ is a $\mathcal C^{k-2,\alpha}$ diffeomorphism onto its image interval. When we reparametrize $\tilde \gamma$
by $R$, $\tilde \gamma_1' (\tilde s) = \nu (R^{-1}(\tilde s))$. 
The vector $\nu$ is obtained by a rotation of $+\pi/2$ from $\gamma'(s)$ and so 
$s\mapsto \nu(s)$ is of class $\mathcal C^{k-1,\alpha}$.
Since $R^{-1}$ is of class  $\mathcal C^{k-2,\alpha}$ and $k-2\ge 1$, $\tilde \gamma_1'$
  is a $\mathcal C^{k-2,\alpha}$ non-vanishing vector function of $\tilde s$, so $\tilde \gamma_1$ is $\mathcal C^{k-1, \alpha}$-regular.
 
\vskip.4cm


Converse:

Now assume that $\tilde \gamma$ is  $\mathcal C^{k-1,\alpha}$-regular.

It is a classical fact (see e.g. \cite{Co} or \cite{wiki2}) that
if we denote by  $\tilde s\mapsto \tilde \gamma_1(\tilde s)$ the arclength parametrization of the evolute, then there exists some $c\in \R$,
necessarily outside of the source interval of that parametrization since we always assume that the original curve
 $\gamma$ is $\mathcal C^2$-regular,
such that a parametrization of $\gamma$ is given by 
\begin{equation}
\label{invol}
\gamma_1(\tilde s)= \tilde \gamma_1 (\tilde s)+ (c-\tilde s) \tilde \gamma_1'(\tilde s),
\end{equation}
which shows that under our hypothesis $\gamma_1$ is of class $\mathcal C^{k-2, \alpha}$ at least, with $k-2\ge 1$,
and so is the arclength parametrization of $\gamma_1$, 
$\tilde s\mapsto s=R^{-1}(\tilde s)$.

We need to show that $\gamma_1'(\tilde s)= (c-\tilde s) \tilde \gamma_1''(\tilde s)$ never vanishes on the relevant interval.  
Since $\tilde s\neq c$ for all the values we consider, it is enough to show that $\tilde \gamma_1''(\tilde s)$,
which is well-defined since $k-1\ge2$, never vanishes.  

But Lemma \ref{increvol} implies that $\tilde \gamma_1'(\tilde s)=\nu (R^{-1}(\tilde s))$.  
The derivative  $\frac{d\nu}{ds}$ never vanishes by the hypothesis
that the curvature of the initial $\gamma$ is nonzero, and $R^{-1}$ is the inverse of a $\mathcal C^1$ function,
so $|R^{-1}(\tilde s_1)-R^{-1}(\tilde s_2)|\ge C|s_1-s_2|$. If we had $ (\nu \circ R^{-1})'(\tilde s_1)=0$,
then we would have 
\[
| \nu(R^{-1}(\tilde s_2))-\nu(R^{-1}(\tilde s_1))| = o (|s_1-s_2|) = o(|R^{-1}(\tilde s_1)-R^{-1}(\tilde s_2)|),
\]
so $\nu'(R^{-1}(\tilde s_1))=0$, a contradiction.

Finally $\nu(s)= \tilde \gamma_1'(R( s))$ must be of class $\mathcal C^{k-2,\alpha}$.
Since $\gamma'(s)$ is deduced from $\nu(s)$ by a rotation of angle $-\frac\pi2$, $\gamma'$ is of class $\mathcal C^{k-2,\alpha}$.
Now the Frenet-Serret formulae give $\nu'(s) = -\frac1{R(s)} \gamma'(s)$, so $\nu'$ is of class $\mathcal C^{k-2,\alpha}$
as a product of such. This implies that $\nu$ is of class $\mathcal C^{k-1,\alpha}$,
so $\gamma'$, deduced from it by a rotation of angle $-\pi/2$, has the same smoothness, and $\gamma$ itself is of class $\mathcal C^{k,\alpha}$.

The fact that the derivative of $R$ does not vanish is equivalent to $\kappa'\neq 0$, where $\kappa=1/R$ is the curvature of $\gamma$.
The curvature of $\tilde \gamma$ is given by $-\kappa^3/\kappa'$; since we have assumed $\kappa(s)\neq0$ for all $s$, 
and $\tilde\gamma$ is of class at least $\mathcal C^2$, the curvature of $\tilde \gamma$ is always bounded and $\kappa'$,
and thus $R'$, do not vanish.

The fact that $R'(s)\neq0$ will also follow from the next section, which states more
precise results about the behavior of the evolute in a neighborhood of a point $s_1$ where $\gamma$ is smooth and $R'(s_1)=0$.
\end{proof*}

\section{Local behavior}

In this section, we will investigate the situations where $R'(s)\neq 0$ no longer holds.
We need to quantify the vanishing of the derivative of the (radius of) curvature.

\begin{definition}
\label{order}
Let $I$ be an open interval, $f:I\longrightarrow R$ be a continuous function, and $s_1\in I$. We set
\begin{align*}
m_f(s_1)^-:= &\sup \{ \mu\ge0: \limsup_{s\to s_1} \frac{|f(s)-f(s_1)|}{|s-s_1|^\mu} <\infty \} \in [0;\infty],
\\
m_f(s_1)^+:= &\inf \{ \mu\ge0: \liminf_{s\to s_1} \frac{|f(s)-f(s_1)|}{|s-s_1|^\mu} >0 \} \in [0;\infty].
\end{align*}
\end{definition}
Note that $m_f(s_1)^+\ge m_f(s_1)^-$.
When $f$ is of class $\mathcal C^k$, with $k\ge m_f(s_1)$, and if there exists $l\in \{1,\dots,k\}$
such that $f^{(l)}(s_1)\neq0$,
\begin{multline*}
m_f(s_1)=m_f(s_1)^+=m_f(s_1)^-
\\
=\max\{ m\ge 1: f^{(l)}(s_1)=0, 1\le l \le m\} + 1= \min\{m\ge 1:f^{(m)}(s_1)\neq 0\}.
\end{multline*}
Notice that when this happens and when $f$ is strictly monotone on a neighborhood of $s_1$, $m_f(s_1)$ must be an odd integer.

With the notations from the introduction, notably a curve $\gamma$ parametrized by arclength $s$,
near a point $s_1$, the \emph{local frame} for the evolute $\tilde \gamma$
will be given by the point $\tilde \gamma (s_1)$ (which we will take to be the origin), 
and the direct orthonormal basis $(\nu(s_1),-\gamma'(s_1))$. 
By a point $(x,y)$ we mean $\tilde \gamma (s_1)+x\nu(s_1)-y\gamma'(s_1)$.

\begin{prop}
\label{locbeh}
Let $\gamma: (a;b)\longrightarrow \R^2$ be a $\mathcal C^2$-regular curve, denote its evolute by $\tilde \gamma$. Let $s_1\in (a;b)$.
\begin{enumerate}[(i)]
\item
Suppose that the signed radius of curvature
$R$ of $\gamma$ is strictly  increasing on $(a;b)$ and 
let $\omega(x):=R(s_1+x)-R(s_1)$.
Then $\tilde \gamma$
can be represented in the local frame by $y=g(x)$, where $g(x)= O(x \omega^{-1}(x))$.
\item
If the evolute parametrized by its arclength, $\tilde \gamma_1$, is $\mathcal C^{1,\alpha}$-regular  near $s_1$ for some $\alpha>0$,
then $m_R(s_1)^- \le 1/\alpha$.

\noindent
If the derivative of $R$ vanishes to infinite order at $s_1$, the curve $\gamma$ is of class $\mathcal C^{1,0}$,
and no better in the $\mathcal C^{1,\alpha}$ scale.
\item
In particular, if $\gamma$ is $\mathcal C^k$-regular,  if $R$  is strictly  increasing on $(a;b)$ 
and $1<m:=m_R(s_1) \le k$,  $\tilde \gamma$
is $\mathcal C^{1,1/m}$-regular  near $s_1$ with $m\ge 3$, and cannot be any smoother.
\item
If $\gamma$ is $\mathcal C^k$-regular and $m:=m_R(s_1)$ is a nonzero even integer, then $R$
is not monotone in any neighborhood of $s_1$ and $\tilde \gamma$ admits a cusp: it is represented,
changing $\nu$ into $-\nu$ if needed,
by the union of two graphs of the form $y=g_1(x)$ and $y=-g_2(x)$, $x\ge 0$, with $g_1(x), g_2(x) \ge 0$ 
and $g_1(x), g_2(x) \asymp x^{1+1/m}$.
\end{enumerate}
\end{prop}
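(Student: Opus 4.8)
The plan is to reduce all four parts to two facts proved earlier: the decomposition of \eqref{diffevol} from Lemma \ref{increvol} in the local frame $(\nu(s_1),-\gamma'(s_1))$, and the identity $\tilde\gamma_1'(\tilde s)=\nu(R^{-1}(\tilde s))$ from the proof of Theorem \ref{main}(i). Writing $t:=s-s_1$, all the estimates are driven by three elementary facts. Since $\gamma$ is $\mathcal C^2$ with $|\gamma'|\equiv1$ and $\langle\gamma'(s_1),\nu(s_1)\rangle=0$, Taylor expansion gives $\langle\gamma'(s_1+t),\nu(s_1)\rangle=O(t)$ and $\langle\gamma'(s_1+t),\gamma'(s_1)\rangle=1+O(t^2)$; and since $\nu'(s_1)=-R(s_1)^{-1}\gamma'(s_1)\neq0$ with $\nu\in\mathcal C^1$, the map $\nu$ is bi-Lipschitz near $s_1$, i.e. $|\nu(s)-\nu(s_1)|\asymp|s-s_1|$.

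For (i) I project \eqref{diffevol} with $s_2=s_1+t$ onto the local frame. The $\nu(s_1)$-component is $x=\omega(t)+\int_{s_1}^{s_1+t}\frac{R(s)-R(s_1+t)}{R(s)}\langle\gamma'(s),\nu(s_1)\rangle\,ds$; using strict monotonicity of $R$ (so $|R(s)-R(s_1+t)|\le|\omega(t)|$ on the interval) together with $\langle\gamma'(s),\nu(s_1)\rangle=O(t)$, the integral is $O(|\omega(t)|\,t^2)$, whence $x=\omega(t)(1+O(t^2))$. The $-\gamma'(s_1)$-component is $y=-\int_{s_1}^{s_1+t}\frac{R(s)-R(s_1+t)}{R(s)}\langle\gamma'(s),\gamma'(s_1)\rangle\,ds$, which the same monotonicity bound and $\langle\gamma'(s),\gamma'(s_1)\rangle=1+O(t^2)$ bound by $C|\omega(t)|\,|t|$. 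By Theorem \ref{main}(i), $\tilde\gamma$ is $\mathcal C^1$-regular with tangent $\nu(s_1)$, so it is locally a graph $y=g(x)$ over the $x$-axis; since $x\asymp\omega(t)$ forces $\omega^{-1}(x)\asymp t$, the bound on $y$ becomes $g(x)=O(x\,\omega^{-1}(x))$.

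For (ii) and (iii) I note that each hypothesis forces $\mathcal C^1$-regularity of $\tilde\gamma$, hence strict monotonicity of $R$ by Theorem \ref{main}(i); taking $R$ increasing, $\tilde s=R(s)$ is arclength and $\tilde\gamma_1'=\nu\circ R^{-1}$. The bi-Lipschitz bound converts Hölder control of $\tilde\gamma_1'$ into growth control of $R$: $|\tilde\gamma_1'(\tilde s)-\tilde\gamma_1'(\tilde s_1)|\asymp|s-s_1|$ while $|\tilde s-\tilde s_1|=|R(s)-R(s_1)|$. For (ii), $\mathcal C^{1,\alpha}$-regularity gives $|s-s_1|\le C|R(s)-R(s_1)|^\alpha$, i.e. $|R(s)-R(s_1)|\ge c|s-s_1|^{1/\alpha}$; by Definition \ref{order} every $\mu>1/\alpha$ makes the relevant $\limsup$ infinite, so $m_R(s_1)^-\le1/\alpha$, and when $R'$ vanishes to infinite order $m_R(s_1)^-=\infty$ excludes every $\alpha>0$, leaving the evolute exactly $\mathcal C^{1,0}$. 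For (iii), finiteness of $m:=m_R(s_1)\le k$ forces $R^{(m)}(s_1)\neq0$ (the remark after Definition \ref{order}) and hence $|R(s)-R(s_1)|\asymp|s-s_1|^m$, giving $|\tilde\gamma_1'(\tilde s)-\tilde\gamma_1'(\tilde s_1)|\asymp|\tilde s-\tilde s_1|^{1/m}$, so the exponent $1/m$ at $s_1$ is sharp; since $R$ is monotone with first nonvanishing derivative $R^{(m)}$, $m$ is odd, hence $m\ge3$. The extra point here, which I view as the \emph{main technical obstacle}, is the uniform Hölder estimate near $s_1$: it requires $|R(s)-R(s')|\ge c|s-s'|^m$ for all nearby $s,s'$, not just $s'=s_1$, which I would obtain from $R(s)-R(s')=\int_{s'}^sR'(u)\,du$ with $R'(u)\asymp|u-s_1|^{m-1}$ and the elementary inequality $b^m-a^m\ge(b-a)^m$ for $0\le a\le b$ (with its symmetric and straddling variants).

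For (iv), $m$ even and $R^{(m)}(s_1)\neq0$ make $R(s)-R(s_1)\asymp(s-s_1)^m\ge0$, so $R$ has a strict local extremum and is not monotone; taking $R^{(m)}(s_1)>0$, both signs of $t$ give $\omega(t)\asymp t^m\ge0$ and $x\asymp|t|^m\ge0$, confining $\tilde\gamma$ to $x\ge0$ and splitting it into the two branches $t>0$ and $t<0$. The shape follows by refining the $y$-integral of (i): with $\omega(\tau)=c\tau^m(1+o(1))$, $c>0$, one finds $\int_0^t(\omega(\tau)-\omega(t))\,d\tau=-\frac{m}{m+1}c\,t^{m+1}(1+o(1))$, so $y\asymp t^{m+1}$ with the sign of $t$ (as $m+1$ is odd). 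Eliminating $t\asymp\pm x^{1/m}$ yields two graphs $y=g_1(x)\ge0$ (from $t>0$) and $y=-g_2(x)\le0$ (from $t<0$) with $g_1(x),g_2(x)\asymp x^{1+1/m}$, the asserted cusp. The delicate point mirrors that of (iii): the cusp is genuine only because the coefficient $-\frac{m}{m+1}c$ is nonzero, which is exactly what upgrades the one-sided bound $y=O(|t|^{m+1})$ coming from (i) to the two-sided $y\asymp t^{m+1}$.
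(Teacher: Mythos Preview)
Your proposal is correct and follows essentially the same route as the paper: project \eqref{diffevol} onto the local frame for (i) and (iv), and exploit $\tilde\gamma_1'=\nu\circ R^{-1}$ together with the arclength identification $\tilde s=R(s)$ for (ii) and (iii). The only minor deviations are that for (ii) the paper reaches the key inequality $|s_2-s_1|\le C|R(s_2)-R(s_1)|^\alpha$ via the involute formula \eqref{invol} composed with $\gamma^{-1}$ rather than via the bi-Lipschitz property of $\nu$, and for the uniform estimate in (iii) the paper uses Taylor--Lagrange with integral remainder instead of integrating $R'(u)\asymp|u-s_1|^{m-1}$; both pairs of arguments are equivalent.
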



Notice that in item (i) we could have used any  strictly increasing $\omega$ such that for $s$ in a neighborhood of $s_1$, $|R(s)-R(s_1)| \ge \omega(|s-s_1|)$. This can apply to cases where $R$ is not monotone.

\begin{proof}

{\it (i).}

From \eqref{DL1} and the subsequent considerations we see that
\begin{multline}
\label{DL2}
\langle \tilde \gamma (s_2) - \tilde \gamma (s_1), \nu(s_1) \rangle =
(R (s_2)-R(s_1)) + \int_{s_1}^{s_2} \frac{R(s)-R(s_2)}{R(s)} \langle \gamma'(s)-\gamma'(s_1), \nu(s_1) \rangle ds
\\
= (R (s_2)-R(s_1)) (1 + O(|s_2-s_1|^2)), 
\\
\langle \tilde \gamma (s_2) - \tilde \gamma (s_1), \gamma'(s_1) \rangle = 
 \int_{s_1}^{s_2} \frac{R(s)-R(s_2)}{R(s)} \langle \gamma'(s),\gamma'(s_1) \rangle ds
 \\
 =  O(|R (s_2)-R(s_1)||s_2-s_1|).
\end{multline}
This implies in particular that the projection from the image of $\tilde \gamma$ to the $x$-axis is one-to-one,
therefore we can represent that image curve as a graph $y=g(x)$; and since $x\asymp R (s_2)-R(s_1)$,
$R (s_2)-R(s_1) \ge \omega(s_2-s_1)$ for $s_2 >s_1$, $R (s_2)-R(s_1) \le -\omega(s_2-s_1)$ for $s_2 <s_1$,
and  $|y| \preceq |R (s_2)-R(s_1)||s_2-s_1|$,
we have $|y|\preceq x \omega^{-1}(x)$.

{\it (ii).}

Since $\gamma$, parametrized by arclength, is (at least) $\mathcal C^2$-regular, we can write $s=\gamma^{-1}(\gamma(s))$,
where $\gamma^{-1}$ is of class $\mathcal C^2$
and only defined locally on the embedded manifold $\gamma(I)$, $I$ a small enough interval.  However
$\gamma$ is also given as an involute of $\tilde \gamma$ by $\gamma_1(\tilde s)= \tilde \gamma_1 (\tilde s) - \tilde \gamma_1' (\tilde s) (s-c)$,
where $\tilde s$ is arclength on the evolute $\tilde \gamma$.  By the regularity hypothesis, $\gamma_1$ will be of class 
$\mathcal C^{0,\alpha}$ and $s=\gamma^{-1}(\gamma_1(\tilde s))$, so there is a constant $C>0$
such that 
\[
|s_2-s_1|\le C |\tilde s_2-\tilde s_1|^\alpha =C|R (s_2)-R(s_1)|^\alpha ; 
\]
if $m_R(s_1)^-<\infty$, for any $\eps>0$ we have 
$C_\eps $ such that $|R (s_2)-R(s_1)|\le C_\eps |s_2-s_1|^{m_R(s_1)^- -\eps}$,
so that $|s_2-s_1|\le C_\eps' |s_2-s_1|^{\alpha (m_R(s_1)^- -\eps)}$, so $\alpha m_R(s_1)^-\le 1$.  

If $m_R(s_1)^-=\infty$, $|R (s_2)-R(s_1)|\le C |s_2-s_1|^{1+1/\alpha}$, which leads to a contradiction.

For the second statement, Theorem \ref{main} implies that $R$ is of class $\mathcal C^1$.
If $R$ was of class $\mathcal C^{1,\alpha}$, $\alpha>0$, we would have $ m_R(s_1)^-\le 1/\alpha$
which contradicts the hypothesis of vanishing of the derivative to infinite order.

{\it (iii).}

The hypothesis implies that $R'$ admits an isolated zero at $s_1$, so on a punctured neighborhood of $s_1$, 
$\tilde \gamma$ is of class at least $\mathcal C^2$ since $k\ge3$.  We have $\omega(x)\asymp x^m$ and
deduce from (i) that $\tilde \gamma$ can be represented in the local frame by $y=g(x)$ et 
$g(x)=O(|x|^{1+1/m})$, in particular $\tilde \gamma$ is differentiable at $s_1$ with $\tilde \gamma_1'(\tilde s_1)=\nu(s_1)$. 

More generally, for $s,s'$  in a neighborhood of $s_1$, $\tilde \gamma'(R(s))=\nu(s)$ and $|\nu(s')-\nu(s_1)| \preceq |s'-s|$;
we claim that
\[
|s'-s| \preceq |R (s')-R(s)|^{1/m}.
\]
To see this, assume without loss of generality that $R^{(m)}(t) \ge c >0$ for $t$ in an open interval $I$ around $s_1$, and $s, s'\in I$,
$s_1\le s < s'$. 
An easy induction argument then shows that for $0 \le j \le m-1$, $R^{(m-j)}(s) \ge \frac{c}{j!} (s-s_1)^j \ge 0$. By the
Taylor-Lagrange formula with integral remainder,
\begin{multline*}
R (s')-R(s) = \sum_{j=1}^{m-1} \frac{(s'-s)^j}{j!} R^{(j)}(s) + \int_s^{s'} \frac{(s'-t)^{m-1}}{(m-1)!} R^{(m)}(t) dt
\\
\ge c \int_s^{s'} \frac{(s'-t)^{m-1}}{(m-1)!}  dt = c \frac{(s'-s)^{m}}{m!}.
\end{multline*}
This proves the claim.

The regularity cannot be improved as a consequence of (ii).

{\it (iv).}

Now $R$ has a strict local extremum. Suppose that it is a minimum,
and that in a neighborhood of $s_1$ we have $R(s)-R(s_1) = c (s-s_1)^m +o(|s-s_1|^m)$, with $c>0$.  Then in the local frame,
$0\le x=R(s)-R(s_1) \asymp (s-s_1)^m$, and for $s\ge s_1$, $y \asymp (s-s_1)^{m+1}$.  For $s\le s_1$, $-y \asymp -(s-s_1)^{m+1}$,
and the claim easily follows. In the case of a local maximum, we change the sign of $\nu$ to obtain the same type of cusp 
in a modified local frame.
\end{proof}

\begin{proof*} {\it End of Proof of Theorem \ref{main} (ii), converse part.}

We already know by the proof of Theorem \ref{main} (i) that $R$ must be strictly monotone. If $R'$ vanished at $s_1$, 
we would have to have $m_R(s_1)\ge 3$, so $\tilde \gamma$ is at best of class $\mathcal C^{1,1/3}$ by Proposition \ref{locbeh}(ii) and (iii),
and cannot be of class $\mathcal C^{2}$.
\end{proof*}

\section{Multiple points}

Our definition of a regular curve restricts attention to a small enough interval in order to avoid problems with double points. 
Can those actually occur?

One can see that an evolute of a regular, one-to-one curve may exhibit double points.

\begin{ex}
There exists a $\mathcal C^\infty$ regular curve $\gamma$ such that its evolute is a lima\c con $\tilde \gamma$ given, for instance, by
the polar representation $r(\theta) = 1 + 2 \cos \theta$, $-\frac{3\pi}4 < \theta < \frac{3\pi}4$. 
\end{ex}

The curves $ \gamma$ and $\tilde \gamma$ are represented in Figure 1.

\begin{proof}
We start from a parametric representation for an arc of lima\c con,
which has a double point at $(0,0)$ corresponding to the values
$\theta= -2\pi/3, 2\pi/3$ of the parameter:
\[
\tilde \gamma(\theta) = 
\begin{pmatrix} {\cos \theta}\\{\sin \theta} \end{pmatrix}
(1 + 2 \cos \theta).
\]
We deduce the arc length from $|\gamma'(\theta) |=5+4\cos\theta$.
An involute for $\tilde \gamma$ is given by
\begin{multline*}
\gamma(\theta) = 
\begin{pmatrix} {\cos \theta}\\{\sin \theta} \end{pmatrix}
\left( 1 + 2 \cos \theta +
2 (\sin \theta)\frac{\int_{-\pi}^{\theta}\sqrt{5 + 4 \cos u}\, du}{\sqrt{5 + 4 \cos \theta}} 
\right) 
\\- 
\begin{pmatrix} {-\sin \theta}\\{\cos \theta} \end{pmatrix}
(1 + 2 \cos \theta)\frac{\int_{-\pi}^{\theta}\sqrt{5 + 4 \cos u}\, du}{\sqrt{5 + 4 \cos \theta}} 
\\
=\begin{pmatrix} {\cos \theta}\\{\sin \theta} \end{pmatrix}
(1 + 2 \cos \theta)
+
\begin{pmatrix}  (\sin \theta)(1 + 4 \cos \theta) \\
{-4 \cos^2 \theta - \cos \theta + 2} \end{pmatrix}
  \frac{\int_{-\pi}^{\theta}\sqrt{5 + 4 \cos u}\, du}{\sqrt{5 + 4 \cos \theta}} .
\end{multline*}
From the first formula, we easily see that $\gamma$ is regular.
Notice that by what follows, $\gamma$ is also a simple curve.
\end{proof}

\includegraphics{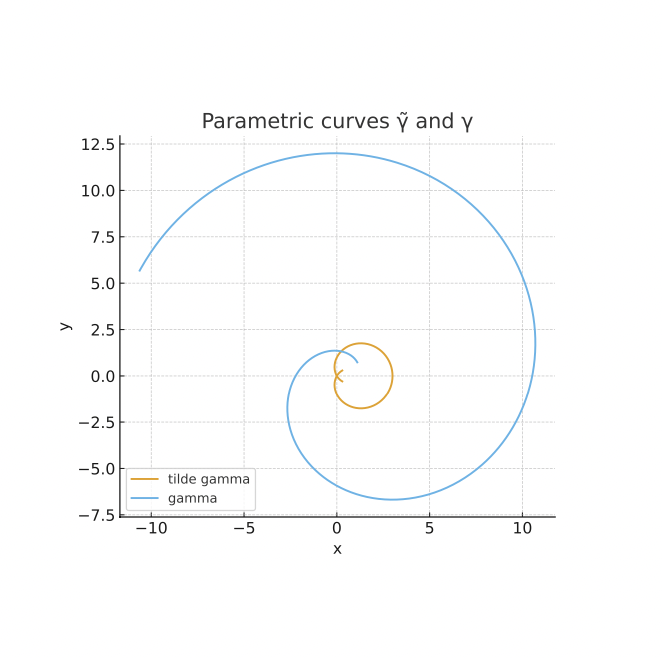}
\begin{center}
\vskip-2cm

Figure 1.
\end{center}

\begin{lem}
\label{involutesimple}
The involute of a smooth curve, when it is a regular curve, has no double point.  
\end{lem}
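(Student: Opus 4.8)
The plan is to exploit the classical evolute/involute duality together with the Tait--Kneser theorem cited in the introduction. Write $\tilde s \mapsto \tilde \gamma_1(\tilde s)$ for the arclength parametrization of the smooth parent curve, and let $\gamma_1(\tilde s)=\tilde \gamma_1(\tilde s)+(c-\tilde s)\tilde \gamma_1'(\tilde s)$ be the involute, as in \eqref{invol}, where $c$ lies outside the parameter interval. First I would record, as already computed, that $\gamma_1'(\tilde s)=(c-\tilde s)\tilde \gamma_1''(\tilde s)$, so that the unit tangent of $\gamma$ is, up to sign, the unit normal $\tilde\nu$ of $\tilde\gamma$; consequently the normal line to $\gamma$ at $\gamma_1(\tilde s)$ is exactly the tangent line to $\tilde\gamma$ at $\tilde\gamma_1(\tilde s)$, since both pass through $\tilde\gamma_1(\tilde s)$ in the direction $\tilde\gamma_1'(\tilde s)$. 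Because the envelope of the tangent lines of $\tilde\gamma$ is $\tilde\gamma$ itself, this shows that $\tilde\gamma$ is the evolute of $\gamma$: the center of curvature of $\gamma$ at $\gamma_1(\tilde s)$ is $\tilde\gamma_1(\tilde s)$, and the corresponding radius of curvature is $|\gamma_1(\tilde s)-\tilde\gamma_1(\tilde s)|=|c-\tilde s|$.

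The key observation is then twofold. On the one hand, the osculating circle $C(\tilde s)$ of $\gamma$ at $\gamma_1(\tilde s)$ is the circle centered at $\tilde\gamma_1(\tilde s)$ of radius $|c-\tilde s|$, and by definition the point $\gamma_1(\tilde s)$ lies on $C(\tilde s)$. On the other hand, since $c$ lies outside the parameter interval, the affine function $\tilde s\mapsto c-\tilde s$ keeps a constant sign there, so the radius of curvature $|c-\tilde s|$ is a \emph{strictly monotone} function of $\tilde s$. This is precisely the hypothesis of the Tait--Kneser theorem, which I would invoke via \cite{Ta} to conclude that the family $\{C(\tilde s)\}$ consists of pairwise disjoint, nested circles.

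Finally I would derive the absence of double points by contradiction. Suppose $\gamma_1(\tilde s_1)=\gamma_1(\tilde s_2)$ with $\tilde s_1\neq \tilde s_2$. Because the radius of curvature is strictly monotone, $C(\tilde s_1)$ and $C(\tilde s_2)$ have distinct radii and are thus two distinct members of the Tait--Kneser family, hence disjoint as subsets of the plane. But the common value $\gamma_1(\tilde s_1)=\gamma_1(\tilde s_2)$ lies on $C(\tilde s_1)$ and on $C(\tilde s_2)$ simultaneously, contradicting disjointness. Hence $\gamma$ has no double point.

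I expect the main obstacle to be making the evolute/involute identification rigorous at the regularity available, and invoking Tait--Kneser correctly: one must check that $\gamma$ is genuinely $\mathcal C^2$ with nonvanishing curvature so that its osculating circles exist, which holds because $\tilde\gamma$ is smooth, $\gamma$ is assumed regular, and $|c-\tilde s|\neq 0$ throughout. A second point requiring care is that ``nested'' in Tait--Kneser really forbids a shared point: two distinct nested osculating circles bound disks, one lying strictly inside the other, so they meet nowhere. Finally, since the statement is global, the monotonicity of the radius of curvature must hold on the entire parameter interval, which it does because $c-\tilde s$ is globally monotone and sign-definite there.
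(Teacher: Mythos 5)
Your proof is correct and follows essentially the same route as the paper: you observe that the involute's radius of curvature is $|c-\tilde s|$, hence strictly monotone, and then apply the Tait--Kneser theorem to conclude that the osculating circles are nested and pairwise disjoint, so no point can recur. The only organizational difference is that the paper factors the second step out as a separate statement (Lemma~\ref{monocurvsimple}: a regular curve with strictly monotone curvature is simple), whose proof is the same nested-circles argument you give.
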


Recall from the proof of Theorem \ref{main} (ii) (converse) that 
$\gamma_1'(\tilde s)= (c-\tilde s) \tilde \gamma_1''(\tilde s)$,
so even if it does not start from a point on the curve $\tilde \gamma$,
the involute could have a singularity when $\tilde \gamma$ undergoes an inflection.

On the other hand the same proof shows that if we start from a curve obtained as an evolute of a regular curve, 
the involute is regular outside of the possible points of contact with the evolute, which is not surprising
since it will be a parallel to the parent curve.

\begin{proof*}{\it Proof of Lemma \ref{involutesimple}.}

By construction, the radius of curvature of an involute, being of the form 
$c-\tilde s$ must be strictly monotone.  We conclude with the next result.
\end{proof*}

\begin{lem}
\label{monocurvsimple}
A smooth regular curve with strictly monotone curvature cannot have a double point.
\end{lem}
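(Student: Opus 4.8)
The plan is to establish the \emph{Tait-Kneser} phenomenon alluded to in the introduction (\cite{Ta}): the osculating circles along $\gamma$ form a strictly nested, pairwise disjoint family. Once this is known the lemma is immediate, because each point $\gamma(s)$ lies on its own osculating circle $C(s)$, and two points lying on disjoint circles cannot coincide. First I would reparametrize $\gamma$ by arc length $s$ (this affects neither the double points nor the monotonicity of $\kappa$), so that curvature and osculating circles are available. Since $\kappa$ is strictly monotone it can vanish at most once, and the standing assumption $\kappa\neq 0$ removes that possibility, so $\kappa$, and hence $R=1/\kappa$, has constant sign and is strictly monotone. For each $s$ the osculating circle $C(s)$ has center $\tilde\gamma(s)$ (the evolute point) and radius $|R(s)|$, with $\gamma(s)\in C(s)$.

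The key computation concerns the motion of the center $\tilde\gamma$. Differentiating $\tilde\gamma=\gamma+R\nu$ and applying the Frenet-Serret formulae gives $\tilde\gamma'(s)=R'(s)\nu(s)$, so $|\tilde\gamma'(s)|=|R'(s)|$. Hence for $s_1<s_2$ the distance between the two centers is bounded by the length of the evolute arc joining them:
\[
|\tilde\gamma(s_2)-\tilde\gamma(s_1)|=\Big|\int_{s_1}^{s_2}R'(s)\nu(s)\,ds\Big|\le\int_{s_1}^{s_2}|R'(s)|\,ds=|R(s_2)-R(s_1)|,
\]
the final equality holding because $R'$ keeps a constant sign. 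Since $R$ itself has constant sign, $|R(s_2)-R(s_1)|=\big||R(s_2)|-|R(s_1)|\big|$ is exactly the difference of the two radii.

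I would then upgrade this to a strict inequality. Equality in the vector triangle inequality would force $R'\nu$ to point in a fixed direction throughout $(s_1,s_2)$; as $R'$ has constant sign, this means $\nu$, hence $\varphi$, is constant there, i.e. $\kappa=\varphi'\equiv 0$, contradicting $\kappa\neq 0$. Thus the distance between the centers of $C(s_1)$ and $C(s_2)$ is \emph{strictly} less than the difference of their radii. Taking (without loss of generality) $|R(s_1)|<|R(s_2)|$, the elementary estimate $|p-\tilde\gamma(s_2)|\le |R(s_1)|+|\tilde\gamma(s_1)-\tilde\gamma(s_2)|<|R(s_2)|$ for every $p\in C(s_1)$ shows that $C(s_1)$ lies strictly inside the open disk bounded by $C(s_2)$, so $C(s_1)\cap C(s_2)=\varnothing$. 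Since $\gamma(s_1)\in C(s_1)$ and $\gamma(s_2)\in C(s_2)$, we conclude $\gamma(s_1)\neq\gamma(s_2)$; as $s_1<s_2$ were arbitrary, $\gamma$ has no double point.

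The one delicate step, and the one I would write out most carefully, is the strictness of the triangle inequality: it is precisely here that both hypotheses are used, non-vanishing of the curvature together with strict monotonicity of $\kappa$ (which guarantees that $R'$ does not vanish identically and preserves its sign), and it is exactly what promotes ``the osculating circles do not cross'' to ``the osculating circles are disjoint.'' Everything else reduces to the short Frenet-Serret computation for $\tilde\gamma'$ and the classical criterion for one circle to be nested inside another.
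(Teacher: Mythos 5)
Your proof is correct, and it follows the same geometric skeleton as the paper's: both arguments reduce the lemma to the strict nestedness of the osculating circles, and then observe that $\gamma(s_1)\in C(s_1)$ and $\gamma(s_2)\in C(s_2)$ cannot coincide when the circles are disjoint. The difference is one of self-containedness: the paper simply invokes the Tait--Kneser Theorem \cite{Ta} as a black box (``osculating circles are nested''), whereas you actually prove it, via the classical computation $\tilde\gamma'(s)=R'(s)\nu(s)$, the resulting bound $|\tilde\gamma(s_2)-\tilde\gamma(s_1)|\le |R(s_2)-R(s_1)|$, and the equality analysis in the triangle inequality. This is essentially the standard proof of Tait--Kneser (it is the one sketched in \cite{GTT}), so your version buys a fully self-contained lemma at the cost of half a page, while the paper's buys brevity at the cost of an external citation. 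You also correctly identified where both hypotheses enter: strictness of the nesting is exactly what a citation-free argument must supply, since non-strict nesting would still permit a double point at an internal tangency.

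One small wrinkle you should smooth out in the strictness step: strict monotonicity of $\kappa$ (hence of $R$) does \emph{not} imply $R'\neq 0$ everywhere, only $R'\ge 0$ (say) with zero set of empty interior. Equality in $\bigl|\int_{s_1}^{s_2}R'\nu\bigr|\le\int_{s_1}^{s_2}|R'|$ pins $\nu$ to a fixed unit vector only at points where $R'\neq 0$; but that set is dense in $[s_1,s_2]$ (otherwise $R$ would be constant on a subinterval), so continuity of $\nu$ forces $\nu$ to be constant on all of $[s_1,s_2]$, whence $\kappa=\varphi'\equiv 0$, the contradiction you wanted. With that sentence added, the argument is complete.
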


\begin{proof}
Let us assume that with the parametrization we have chosen, the radius of curvature is strictly increasing. We claim that
near each point of the curve, $\gamma(s)$ lies inside its osculating circle at $\gamma(s_1)$  for $s<s_1$ and outside 
of it for $s>s_1$. Indeed the Tait-Kneser Theorem \cite{Ta} tells us that osculating circles are nested within each other,
so if $s_1<s_2$, the point $\gamma(s_2)$ is on the osculating circle at $\gamma(s_2)$,
and $R(s_2)>R(s_1)$, so that circle is entirely outside the osculating circle at $\gamma(s_1)$, on which lies $\gamma(s_1)$.
This way $\gamma(s_2)$ can never become equal to a previously taken value.
\end{proof}

{}

\end{document}